\documentclass[11pt]{amsart}
\usepackage{amsmath,amsthm,amscd,amssymb,graphics,wasysym,float, graphicx}
\usepackage[all]{xy} 

\setlength{\textwidth}{5.5in}
\setlength{\textheight}{8.2in}
\setlength{\oddsidemargin}{.5in}
\setlength{\evensidemargin}{.5in}
\setlength{\topmargin}{.1in}
\setlength{\headsep}{.3in}

\newtheorem{thm}{Theorem}[section]

\newtheorem{cor}[thm]{Corollary}
\newtheorem{prop}[thm]{Proposition}

\newtheorem{lma}[thm]{Lemma}
\newtheorem{rem}[thm]{Remark}

\pdfinclusioncopyfonts=1 


\theoremstyle{definition}
\newtheorem{dfn}[thm]{Definition}

\theoremstyle{remark}
\newtheorem{remark}[thm]{Remark}

\numberwithin{equation}{section}

\newcommand{\bbr}{\begin{remark}}        
\newcommand{\eer}{\end{remark}}

%
%
\newcommand{\bea}{\begin{eqnarray}}
\newcommand{\eea}{\end{eqnarray}}
\newcommand{\bmini}{\begin{center}\begin{minipage}{5in}}
\newcommand{\emini}{\end{minipage}\end{center}}
%
%

%
%
\newcommand{\R}{\mathbb{R}}

\newcommand{\Z}{\mathbb{Z}}


\newcommand{\Strata}{{\mathcal{ST}}}


%
%

\newcommand{\pa}{\partial}

\begin{document}

\title{Transverse string topology  and the cord algebra}
\author{Somnath Basu \and Jason McGibbon \and Dennis Sullivan \and Michael Sullivan}

\address{Somnath Basu, Department of Mathematical Sciences \\ Binghamton University}
\email{somnath@math.binghamton.edu}

\address{Jason McGibbon, Department of Mathematics\\ Massachusetts Institute of Technology}
\email{mcgibbon@math.mit.edu}

\address{Dennis Sullivan, Mathematics Department\\
State University of New York, Stony Brook}
\email{dennis@math.sunysb.edu}

\address{Michael Sullivan, Department of Mathematics and Statistics\\ University of Massachusetts}
\email{sullivan@math.umass.edu}



\begin{abstract}
We define a coalgebra structure for open strings transverse to any framed codimension 2 submanifold $K\subset M$. When the submanifold is a knot in $\R^3,$ we show this structure recovers a specialization of Ng cord algebra \cite{Ng3}, a non-trivial knot invariant which is not determined by a number of other knot invariants.
\end{abstract}

\thanks{JM was partially supported by NSF grant DMS-0943787.
	DS was partially supported by NSF grant DMS-0757245.
	MS was partially supported by NSF grant DMS-1007260.}

\maketitle


\section{Introduction}
\label{Introduction.sec}

There are many geometrically defined operations in open-closed string topology. 
See \cite{DPS}, for example. Some of them pass to homology, while others have boundary terms (or ``anomalies") and do not. In \cite{CS} and \cite{Po}, the splitting of closed strings has an anomaly which is canceled by working modulo constant strings. We do not apply this to the splitting of open strings because the unit of the combining operation is made of constant
strings. Still, we desire to remove this anomaly for several reasons which will be explained in this note and others \cite{Ba, McG, SS}.


We treat the anomaly for splitting, which occurs in ``traditional" (universal) string topology, by passing to the subset of open strings which are transverse to the submanifold.
We call the resulting theory {\bf{transverse string topology}} to distinguish it from the well-studied universal string topology. This is a drastic change in perspective because the algebraic topology of the space of strings changes in a subtle way. The value of doing this was motivated by the desire to relate this structure on open strings for classical knots to the Ng cord algebra defined in \cite{Ng2,Ng3}. As noted independently by different people, one of the defining relations of the Ng cord algebra resembles a relation related to splitting open strings.

Indeed, splitting transverse open strings defines a differential coalgebra structure where the differential has two pieces: an internal boundary operator (without anomaly); and a term which resolves the concatenation of two open strings which do not intersect the submanifold in their interiors. This term can be recognized via the bar construction for a partially-defined combining operation on such open strings. Our main result in this note, which we restate more precisely as Theorem \ref{MainTheorem.thm}, is the following:

\begin{thm}\label{thmintro}
The zeroth homology of the cobar construction on the coalgebra of vertically transverse open strings, relative to a cross section of the normal bundle, determines a ring isomorphic to a specialization of Ng's cord algebra. The isomorphism is as algebras over the group ring of an infinite cyclic group.
\end{thm}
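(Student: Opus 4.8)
The plan is to give explicit presentations of the two rings and then write down mutually inverse homomorphisms over the relevant group ring. The first step is to unwind the cobar construction. The coalgebra $C$ of vertically transverse open strings relative to a section $\tilde K$ of the normal bundle is built from the chains of the space of strings that meet $K$ transversally with endpoints on $\tilde K$; its coproduct splits a string at its interior intersections with $K$, and, as indicated in the introduction, its differential is the internal boundary plus the term that resolves the concatenation of two strings whose interiors avoid $K$ --- exactly the term produced by the bar construction of the partially defined combining operation on such strings. I would show that $H_0(\Omega C)$ is the tensor algebra over the group ring $\Z[\Z]$ of the infinite cyclic group on $\pi_0$ of this string space (homotopy classes of vertically transverse open strings rel $\tilde K$), modulo the two-sided ideal generated by the image of the cobar differential out of total degree one, and that this ideal is generated by two geometric relations: (a) a skein relation expressing the product of two cord classes as a combination of the two resolutions of their concatenation, coming from the combining operation together with a one-parameter family that sweeps once across $K$; and (b) a trivial-string relation from a small explicit family near a point of $K$. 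The $\Z[\Z]$-action is by inserting a loop winding once around $\tilde K$.

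The cleanest way to make this precise is via bar--cobar duality: the coalgebra $C$ is quasi-isomorphic to the bar construction $B\mathcal{A}$ of the $A_\infty$-algebra $\mathcal{A}$ --- more honestly, the $A_\infty$-category with objects the points of $\tilde K$ --- whose underlying chains are those of the space of cords and whose operations are the resolved concatenations, so that $\Omega C \simeq \Omega B\mathcal{A} \simeq \mathcal{A}$ and hence $H_0(\Omega C) \cong H_0(\mathcal{A})$. Passing from a category over the points of $\tilde K$ to a ring over $\Z[\Z]$ uses that $K$ is connected, the powers of the generator of $\Z[\Z]$ recording how many times one winds around $K$ while sliding an endpoint; the unit is the class of the constant string. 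Working this out reproduces exactly the presentation above.

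Next I would recall Ng's presentation \cite{Ng3} of the cord algebra over $\Z[\lambda^{\pm 1},\mu^{\pm 1}]$ by homotopy classes of cords of $K$ modulo the trivial-cord relation and the skein relation for the two resolutions of a concatenation at an interior point of $K$. I would then define a ring homomorphism $\Phi$ from the appropriate specialization of this algebra --- the one obtained by setting one of the two peripheral parameters to $1$, the other surviving as the generator of $\Z[\Z]$ --- to $H_0(\Omega C)$ by sending a cord to the class of a vertically transverse representative rel $\tilde K$. This is well defined because every cord admits such a representative, unique up to homotopy, the space of auxiliary choices being contractible, and the two cord-algebra relations go to relations (a) and (b). Conversely every generator of $H_0(\Omega C)$ is in particular a cord, which gives a map $\Psi$ the other way, and a check on generators and relations shows $\Phi$ and $\Psi$ are mutually inverse. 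Both are equivariant for the two evident circle actions once the $\tilde K$-slide is matched with the surviving peripheral parameter --- which also pins down which specialization occurs --- and this produces the isomorphism as algebras over $\Z[\Z]$.

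The main obstacle is the first step, in two respects. First, one must show that the cobar differential contributes \emph{exactly} relations (a) and (b): a priori it involves chains of the transverse string space in all degrees, so one needs the anomaly-free property of the internal boundary together with the $A_\infty$ rectification of the partial combining operation and the quasi-isomorphism $C \simeq B\mathcal{A}$ to conclude that only the low-degree homology of the cord space matters and that it yields precisely these generators and relations --- this is where restricting to the subtly different transverse string space (rather than the universal one) earns its keep, by making the splitting anomaly-free. Second, one must carry out the local geometric identification on the nose, matching the term that resolves a concatenation of two non-$K$-crossing strings with Ng's skein relation; this forces one to track the framing of $K$ and the resulting powers of the group-ring generator through the resolution, and it is here that the choice of section $\tilde K$ enters and that the exact specialization of Ng's two-variable algebra is determined. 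By comparison, relation (b) and the equivariance of $\Phi$ and $\Psi$ are routine once the local model is fixed.
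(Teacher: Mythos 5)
Your overall architecture---present $H_0$ of the cobar by generators and relations and then match it against Ng's presentation---is the same as the paper's, but several of the specifics you commit to are wrong or missing, and they are exactly the points where the identification with the cord algebra lives. First, the class of the constant string is \emph{not} the unit of $H_0(\Omega\mathcal{C})$. The relation produced by the degree-one part of the cobar differential on a string with a single interior intersection is $[xy]+[xmy]=[x]\otimes[y]$, which collapses the tensor algebra on cords to the group ring $\Z\pi_1(M\setminus K)$ with the \emph{twisted} product $x\,\tilde{*}\,y = x([e]+[m])y$; in particular $[e]\,\tilde{*}\,[x]=[x]+[mx]\neq[x]$. The unit must be adjoined formally, and Ng's relation $[e]=1+\mu$ (hence the entire $\Z[\mu^{\pm1}]$-structure, via $\mu\mapsto[e]-1$, $\mu^{-1}\mapsto[\bar m]-1$) only makes sense after that. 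Your statement that ``the unit is the class of the constant string'' contradicts your own relation (b) and would destroy the isomorphism. Second, you describe the $\Z[\Z]$-action as recording how an endpoint winds around $K$ (or as inserting a loop around the section); that is the longitude, which is precisely the peripheral class killed in this specialization --- the $1$-families swept out by sliding an endpoint around $K$ give $[lx]=[xl]=[x]$. The surviving generator is the meridian, and it does not act by inserting a meridian loop on the nose but through the twisted product as $[e]-1$. Third, $H_0(\Omega\mathcal{C})$ does not by itself satisfy $[mx]=[xm]$; one must further quotient by the commutators of the classes of $[e]$ and $[\bar m]$ before the result is even an algebra over $\Z[\mu^{\pm1}]$, and your proposal never imposes this.

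Beyond these, the load-bearing step of your plan --- the quasi-isomorphism of $\mathcal{C}$ with the bar construction of an $A_\infty$-rectification of the partial concatenation, so that $\Omega\mathcal{C}$ is equivalent to that algebra --- is asserted rather than proved, and you yourself flag it as the main obstacle. The paper does not need it: the $0$-chains of $\Omega\mathcal{C}$ are tensor words in $0$-chains of the stratum of strings with no interior intersections, and the relevant $1$-boundaries fall into three explicit geometric types (endpoint homotopies, strings with a single interior intersection, and endpoint slides along $K$), so a direct low-degree computation already yields the full presentation. To repair your route you should either actually supply the bar--cobar argument (this is the approach of Basu's thesis, not of this paper) or replace it with this elementary degree count, and in either case correct the three points above before attempting the comparison with Ng's relations.
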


In \cite{Ng1} Ng shows that the cord algebra is a powerful knot invariant. Combining the isomorphism of Theorem \ref{thmintro} with his results we find the following:

\begin{cor} 
\label{NgComputation.cor}
The operations on (transverse) open strings defines a non-trivial
knot invariant not determined by any of the following: Alexander polynomial, Jones polynomial, HOMFLY polynomial, Kauffman polynomial, signature, Khovanov invariant, and Ozsvath-Szabo invariant.
\end{cor}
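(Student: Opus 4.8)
The corollary is essentially a formal consequence of Theorem~\ref{thmintro} together with Ng's computations in \cite{Ng1}, so the proof I have in mind is short; the one delicate point is the bookkeeping about the specialization. First I would observe that the string-topological object is a knot invariant: by Theorem~\ref{thmintro} the degree-zero cobar homology of the coalgebra of vertically transverse open strings (relative to a chosen cross section of the normal bundle) is isomorphic, as an algebra over the group ring $\Z[t^{\pm 1}]$ of an infinite cyclic group, to a specialization of Ng's cord algebra of the knot. Since Ng's cord algebra is an invariant of the oriented knot type, so is any specialization of it, and hence so is the isomorphism class of this $\Z[t^{\pm1}]$-algebra. The auxiliary choice of cross section enters only through the presentation, not through the isomorphism class — changing the framing conjugates the cord-algebra presentation by an automorphism — so ``the transverse open string operations'' in the statement unambiguously assign to each oriented knot $K\subset\R^3$ a well-defined isomorphism class of $\Z[t^{\pm1}]$-algebras, call it $\A(K)$.

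Next I would import Ng's examples. Recall from \cite{Ng1} that the cord algebra separates pairs — indeed infinite families — of knots that agree on all of the Alexander, Jones, HOMFLY, and Kauffman polynomials, the signature, Khovanov homology, and Heegaard Floer (Ozsv\'ath--Szab\'o) homology. The substantive step is to verify that the particular specialization occurring in Theorem~\ref{thmintro} does not erase these distinctions: one must check that Ng's separating computations are already carried out in, or descend to, this specialization, so that the corresponding specialized cord algebras — equivalently, by the previous paragraph, the algebras $\A(K_i)$ — remain non-isomorphic.

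With this in hand the conclusion is immediate. Fix such a pair $K_1,K_2$. Every invariant in the list agrees on $K_1$ and $K_2$ by choice, while $\A(K_1)\not\cong\A(K_2)$. Hence $\A(\cdot)$ is not a function of any of the listed invariants; in particular it is non-constant, distinguishing $K_1$ from $K_2$ (and, since at least one of the $K_i$ is knotted, some nontrivial knot from the unknot), which proves Corollary~\ref{NgComputation.cor}.

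The expected main obstacle is precisely the passage through the specialization — everything else is formal transport of structure along the isomorphism of Theorem~\ref{thmintro}. If for a particular family the distinguishing power were lost under specialization, one would need only to substitute a different family; Ng's results carry enough robustness (many of the relevant separations persist even after further specializing $t$ to a numerical value) that suitable examples remain available.
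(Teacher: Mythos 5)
Your proposal is correct and follows essentially the same route as the paper, which simply combines the main theorem with Ng's computations (citing \cite[Proposition 8.4]{Ng1} for the separating examples and \cite[Proposition 4.3]{Ng3} to handle precisely the specialization issue you flag). The only difference is that you spell out the formal transport-of-structure and well-definedness steps that the paper leaves implicit.
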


\begin{proof}
This follows from our main result, \cite[Proposition 8.4]{Ng1} and \cite[Proposition 4.3]{Ng3}. 
\end{proof}

One way to understand what is really going on here is based on \cite{Ba}.
That thesis defines the partial multiplication alluded to above  from a categorical perspective where the (cobar construction) of (bar construction) is an equivalence. This allows one to interpret the cobar of the open string coalgebra, in terms of an algebraic construction, as a twisted Pontrjagin ring of the based loop space of the complement. In the case of knots (in $S^3$) this can be realized as a version of the Ng cord algebra. 
As an application of the theory developed in the thesis, the transverse string topology in \cite{Ba} distinguishes two homotopic but non-homeomorphic Lens spaces.

This explanation itself was motivated in two ways by \cite{McG}. First, that thesis made use of a cobar of the coalgebra for a specific set of open strings. Secondly, that construction required transversality, which was already known
to remove the internal anomaly and to make the partial algebra structure possible.
The thesis goes on to recover the cord algebra, as well as higher degrees of relative contact homology (for $\Z$-coefficients).

The cord algebra is isomorphic to a particular version of relative contact homology defined using pseudo-holomorphic disks \cite{EENS}.
When there is no submanifold, a morphism between a similar contact homology and a closed string topology theory
was introduced in \cite{CL}. This morphism is natural in that it comes from geometry.  A relative version of this morphism has been proposed in \cite{CELN}.

We would like to emphasize that we do not assume that ``the information of the complement" is directly given to us. Rather, we derive what we need from the transverse open strings.
With this point of view but from a different perspective, \cite{SS} via 
a cyclic Hochschild construction proves that transverse string topology of the knot
recovers the bracket (and other information) on the equivariant loop space of the complement. Via \cite{CG}, this information is enough to recover the Seifert and hyperbolic graph structure of the JSJ-decomposition \cite{JSJ} of the complement. Hence transverse string topology again provides non-trivial knot invariants.

The paper is organized as follows.
In Section \ref{TransverseStringTopology.sec} we review our version of transverse string topology and define the string algebra $A_{St}$ via the homology of its cobar.
In Section \ref{Ng.sec} we review 
one of Ng's presentations of his cord algebra $A_{Ng}.$ We then prove the main isomorphism Theorem \ref{MainTheorem.thm}
 relating the two algebras.

\section{Transverse string topology}
\label{TransverseStringTopology.sec}

In this section, we review a particular space of open strings introduced in \cite{SS} and some natural algebraic structures on it.
We also discuss the cobar construction and compute its zeroth homology, which leads to our definition of the string algebra $A_{St}.$
We compute $A_{St}$ in the case of the unknot in $\R^3.$

\subsection{The space of strings}
\label{Space.sec}

Let $K \subset M$ be any framed codimension 2 submanifold.
Let $N$ be a tubular neighborhood of $K.$
Use the framing to trivialize the normal bundle $N \rightarrow K$ and to define antipodal sections 
\[
\Gamma^+, \Gamma^-: K \rightarrow \pa N. 
\]
For points $P,Q$ in the fiber $N_q$ over the point $q \in K,$ 
let $Q-P$ denote the oriented line segment in $N_q$ 
from $P$ to $Q.$

\begin{dfn}
\label{OpenSpace.def}
Let $\Strata = \Strata(K, \Gamma^\pm)$ be the set of open strings (smooth maps
of the unit interval into $M$) starting and ending at $K$ transverse
to $K,$ and which intersect $N$ in a radial way at $\Gamma^+$ and $\Gamma^-$ only. More specifically, if a string $\omega \in \Strata$ starts (resp. ends) at the point $q_0$ (resp. $q_1$) in $K$ then $\omega$ first leaves (resp. last enters) $N$
as the line segment $\Gamma^+(q_0)-q_0$ (resp. $q_1 - \Gamma^-(q_1) $). 
Near any other intersection point $q$ of $\omega$ and $K,$ 
$\omega \cap N =  \Gamma^+(q)- \Gamma^-(q)$ for $q \in K.$ 
\end{dfn}
\vspace*{-0.2cm}
\begin{figure}[h]
\begin{center}
\includegraphics[scale=0.55]{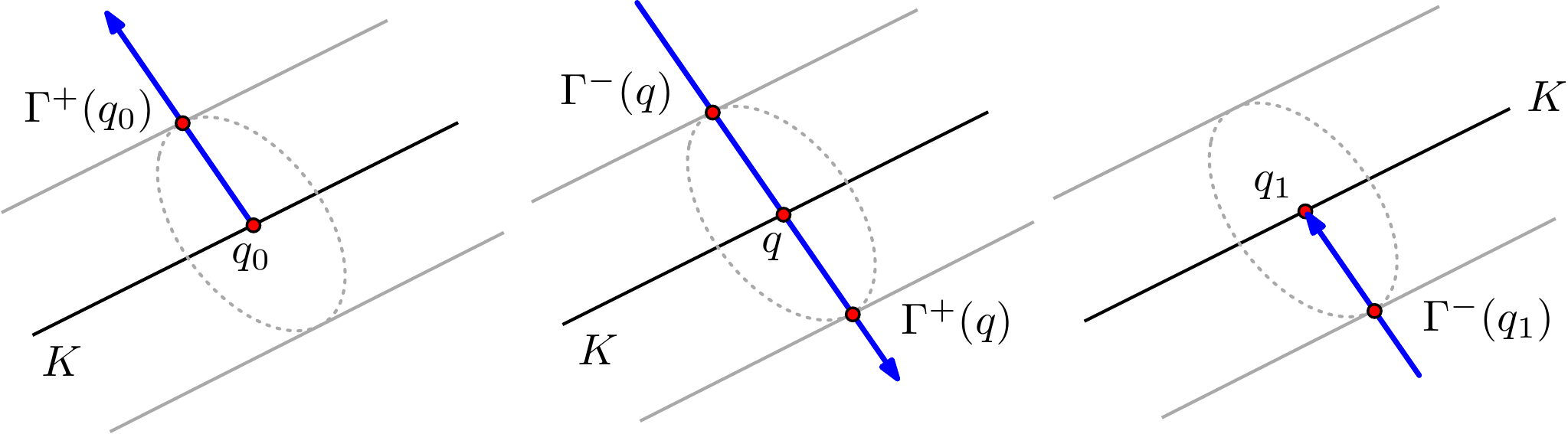}\vspace{0.2cm}
\caption{The various intersection points of an open string.}
\end{center}
\end{figure}


With any reasonable topology,  we can write the strings as a disjoint union
\[
\Strata  = \bigsqcup_{n \ge 0} \Strata_n
\]
where $\Strata_n$ is the stratum of strings in $\Strata$ intersecting  $K$ exactly $n$ times (excluding the start
and endpoint).

\begin{remark} 
We could also consider the space of string that are transversal to the submanifold and may intersect with any tangent condition (dropping the $\Gamma^{\pm}$ condition). For chains on the space, the subsequent operations we introduce are only ``partially defined" subject to tangent matching conditions. Nevertheless, the ideas of this paper still hold using the cobar construction for partially-defined structures (bicomodules) as presented in \cite{Ba}.
\end{remark}

\subsection{Algebraic structures}
\label{AlgebraicStructures.sec}



 
\begin{dfn}
\label{Resolve_i.dfn}
Let $\omega \in \Strata_n$ be a string with its $i^{\textup{th}}$ internal intersection at $q \in K.$ 
Consider the unit circle in $N_q,$ oriented by the co-orientation of $N,$ starting
and ending at $\Gamma^-(q).$ 
Let $e(q)$ denote the half of this circle from $\Gamma^-(q)$
 to $\Gamma^+(q)$ whose orientation disagrees with the circle,
 and $m(q)$ denote the half from $\Gamma^-(q)$
 to $\Gamma^+(q)$ whose orientation agrees.
Define the $i^\textup{th}$ {\bf{resolve}} of $\omega,$ $R_i(\omega)$, to be sum of a smooth approximation of two copies of the original curve $\omega:$ one with the line segment $\Gamma^+(q) - \Gamma^-(q)$ replaced by $e(q)$ and the other with the line segment replaced by $m(q).$
\end{dfn}
\begin{figure}[h]
\begin{center}
\includegraphics[scale=0.55]{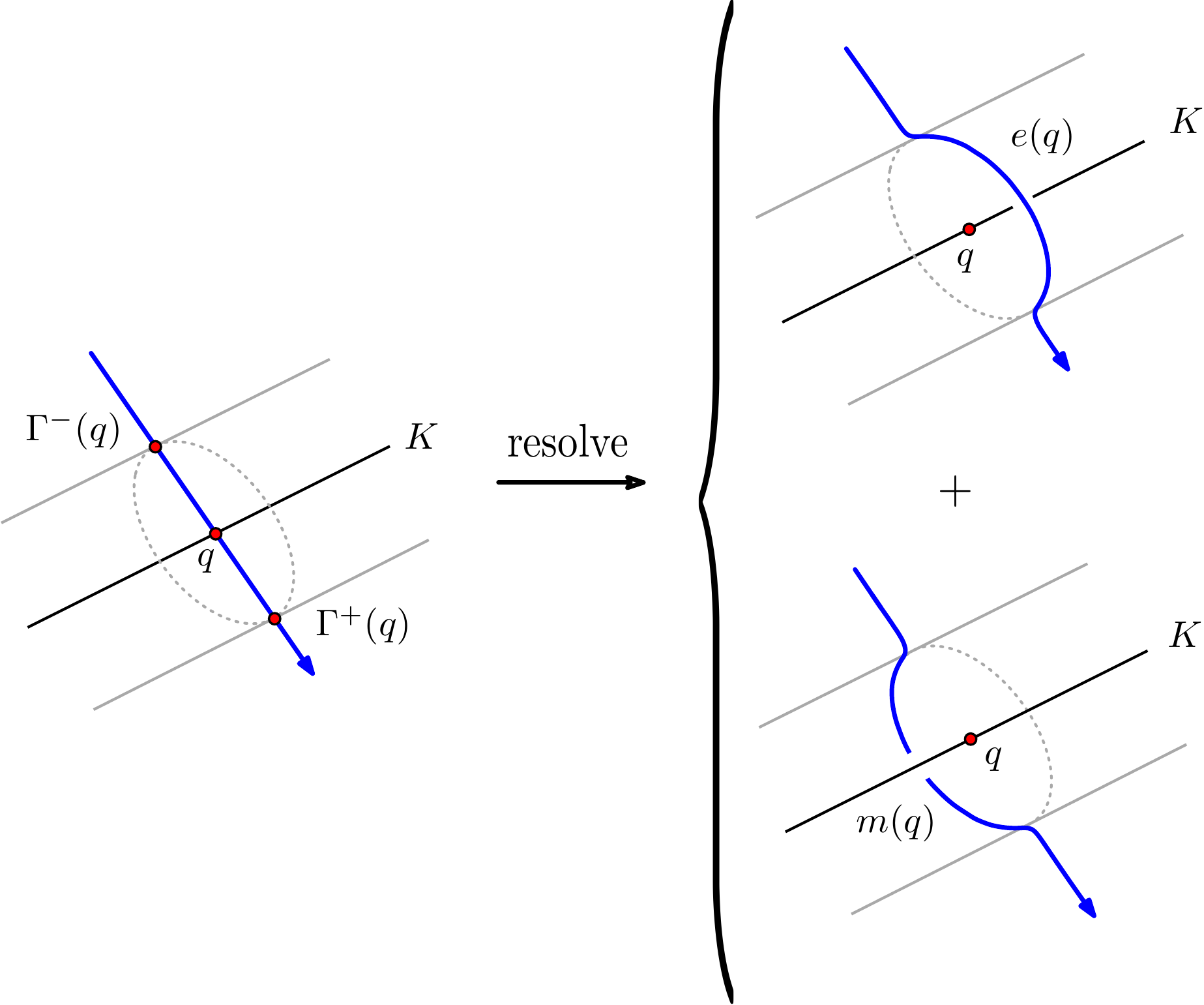}\vspace{0.2cm}
\caption{The resolve operation at a point.}
\end{center}\vspace*{-0.3cm}
\end{figure}

\begin{dfn}
\label{Split_i.dfn}
Let $\omega \in \Strata_n,$  with $1 \le i \le n.$
 Define the $i$-th {\bf{split}} of $\omega,$ $\Delta_i(\omega),$
 to be the ordered pair of curves $\omega' \in \Strata_{i-1}$ and $\omega''
 \in \Strata_{n-i-1}$ obtained by splitting $\omega$ at its $i$th intersection
 and reparameterizing the split interval into two unit interval domains. More formally, given $\omega:[0,1]\to M$ such that $w(\tau)=q\in K$ is an intersection point, we define the open strings
\begin{displaymath}
\omega':[0,1]\to M, t\mapsto \omega(\tau t)
\end{displaymath}
\begin{displaymath}
\omega'':[0,1]\to M, t\mapsto\omega\left(\tau + (1-\tau)t\right) .
\end{displaymath}
\end{dfn} 



Let $C_*(\Strata)$ be the singular chain complex with topological
boundary map $\pa.$
We set the grading so that an $n$-cell in $\Strata_m$ has grading
$n+m+1.$ 

\begin{dfn}
\label{Resolve.def}
Define the $i$-th {{resolve map}}, $R_i: C_k(\Strata) \rightarrow C_{k-1}(\Strata)$ by the pointwise defined map
$R_i(\omega)$ from Definition \ref{Resolve_i.dfn}.
Define the degree $-1$ {\bf{resolve map}}, $R: C_k(\Strata) \rightarrow C_{k-1}(\Strata)$
for each cell $c \in C_{k-l}(\Strata_{l-1})$ as the alternating sum 
\begin{equation}
\label{R.eq}
R(c) = \sum_{i=1}^{l-1} (-1)^{i-1}R_i(c)
\end{equation}
and extend to all of $C_k(\Strata)$ by linearity.
\end{dfn}
\begin{figure}[h]
\begin{center}
\includegraphics[scale=0.50]{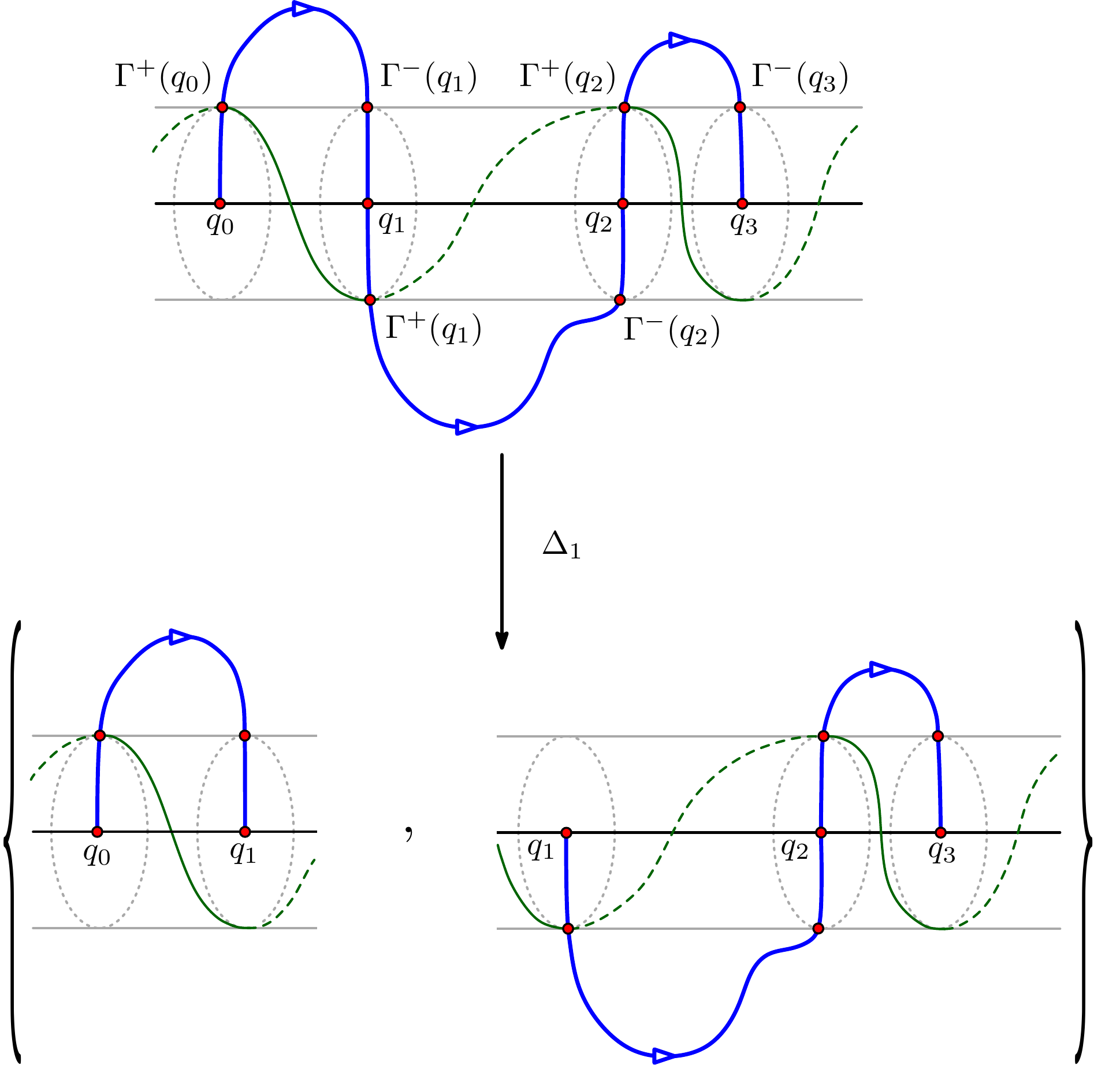}\vspace{0.2cm}
\caption{A picture of $\Delta_1$ in the coproduct.}
\end{center}\vspace*{-0.3cm}
\end{figure}

\begin{dfn}
Similarly, define the degree 0 {\bf{coproduct map}}
\[
\Delta:  C_k(\Strata) \rightarrow   \bigoplus_{i+j = k} C_i(\Strata) \otimes C_j(\Strata), \quad
\Delta(x) = \sum_{i=1}^{n} (-1)^{i-1} AW \circ \Delta_i(x)
\]
by composing the Alexander-Whitney map $AW$ with the pointwise-defined maps
\[
\omega \mapsto \Delta_i(\omega).
\]
\end{dfn}

A straightforward check reveals

\begin{lma}
\label{WellDefined.lma}
$R$ and $\Delta$ commute with $\pa,$ each other, and square to zero.
\end{lma}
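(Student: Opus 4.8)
The plan is to verify the three assertions of Lemma~\ref{WellDefined.lma} separately, in the order: (1) $R$ and $\Delta$ each commute (up to sign) with the topological boundary $\pa$; (2) $R$ and $\Delta$ anticommute with each other; (3) $R^2 = 0$ and $\Delta^2 = 0$ (the latter meaning coassociativity of the differential in the appropriate graded sense). Since both $R_i$ and $\Delta_i$ are defined pointwise on strings and then assembled by an alternating sum (with $\Delta$ also postcomposed with the Alexander–Whitney map), each statement reduces to a local analysis at the internal intersection points together with a standard simplicial bookkeeping of signs. I would set up notation so that a string $\omega \in \Strata_n$ has internal intersections $q_1,\dots,q_n$ in order, and track how $\pa$, $R_i$, and $\Delta_i$ interact with this ordered list.

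First I would handle $[\pa, R] = 0$. The topological boundary $\pa$ of a singular cell in $\Strata_m$ has two effects: it takes faces of the simplex (which does not change the combinatorial type of the string, i.e. does not change $m$ or the ordering of intersections), and it can produce degenerate strata where an intersection point slides to a tangency or where two consecutive intersection points collide. The pointwise-defined $R_i$ commutes with taking simplex faces automatically. The content is that on the codimension-one boundary strata where intersection number drops, the resolve map $R_i$ at a \emph{surviving} intersection is continuous across the stratum, while the potential discrepancy comes precisely from resolving at the colliding pair; I would argue this discrepancy is itself a boundary term, so that the alternating sum in \eqref{R.eq} telescopes. The sign $(-1)^{i-1}$ is chosen exactly so this cancellation occurs, in the same way that the simplicial boundary $\sum (-1)^i d_i$ squares to zero. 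I would do the same analysis for $[\pa,\Delta]=0$, now additionally invoking that the Alexander–Whitney map is a chain map so that $\pa$ passes through $AW$ in the standard Leibniz way; the boundary-of-a-product terms on the $C_i \otimes C_j$ side match the collision terms on the $\Strata$ side.

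Next, for $R\Delta + \Delta R = 0$ (with appropriate Koszul sign) and $R^2 = 0$, the point is that $R_i$ and $R_j$ act on disjoint intersection points when $i \ne j$ and hence commute as pointwise operations, while $R_i$ applied twice at the same point is zero because after the first resolve that point is no longer a transverse intersection with $K$ in the required radial form (the segment $\Gamma^+(q)-\Gamma^-(q)$ has been replaced by an arc $e(q)$ or $m(q)$, so there is no $i$th internal intersection left to resolve there). The alternating signs then force $R^2 = \sum_{i<j}(\pm)(R_iR_j - R_jR_i) = 0$ by the usual antisymmetrization. Similarly $\Delta_i$ and the resolves at points other than the $i$th split apart onto the two factors, so $R$ and $\Delta$ interact only through the combinatorics of where the split falls relative to the resolved point, giving the anticommutation after one more sign count; and $\Delta^2 = 0$ is coassociativity, which follows from coassociativity of $AW$ together with the same $i<j$ antisymmetrization.

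The main obstacle I anticipate is the first step — showing $R$ and $\Delta$ are chain maps — because this is where the \emph{geometry} of the boundary strata of $\Strata$ enters, not just formal sign juggling: one must be sure that the transversality and radial-intersection conditions in Definition~\ref{OpenSpace.def} are preserved under the relevant limits, that the only codimension-one degenerations are the expected collision-of-intersections and onset-of-tangency phenomena, and that the resolve/split operations extend continuously (after smooth approximation) across these strata. Everything after that — the anticommutativity and the squaring-to-zero — is, as the authors say, a ``straightforward check'' of the Koszul-type signs attached to an ordered family of commuting-or-vanishing local operations.
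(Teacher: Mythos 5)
The paper offers no argument for Lemma~\ref{WellDefined.lma} beyond calling it a straightforward check, so there is no written proof to compare against; but your proposal misplaces where the content lies. The step you flag as the main obstacle --- commutation with $\pa$ --- is the trivial one, and the mechanism you propose for it (cancellation of ``collision'' and ``onset-of-tangency'' boundary terms against the alternating sign in \eqref{R.eq}) does not match the setup. Here $C_*(\Strata)$ is the singular chain complex of the disjoint union $\bigsqcup_n \Strata_n$ and $\pa$ is the ordinary singular boundary; a singular simplex is connected, hence lands in a single stratum $\Strata_n$, and all of its faces stay in that stratum. No intersection point can collide with another or degenerate to a tangency inside a chain, because such limits leave $\Strata$ altogether and are not part of this chain complex (the would-be strata-crossing phenomena are what the extra differential $R$ encodes; they are not part of $\pa$). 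Consequently each $R_i$ and each $\Delta_i$ is induced by a continuous map between strata --- granting, as Remark~\ref{Dependence.rmk} implicitly does, that the smoothing and reparameterization choices are made continuously in families --- and commutes with $\pa$ for the standard functorial reason, with the coproduct case using only that $AW$ is a chain map. The sign $(-1)^{i-1}$ plays no role in this step, so the claim that it is ``chosen exactly so this cancellation occurs'' for $[\pa,R]$ is the wrong picture; what one does need is a global Koszul sign on $R$ so that $\pa$ and $R$ anticommute rather than commute, which your ``up to sign'' glosses over but is routine.

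The parts you call formal are right in spirit but need one correction: after applying $R_i$ or $\Delta_i$ the surviving intersections are reindexed, so the relevant identity is $R_i R_j = R_{j-1} R_i$ for $i<j$ rather than literal commutation, and $R^2=0$ follows from the usual simplicial-identity pairing of off-diagonal terms --- there are no diagonal terms to kill, since the $i$th intersection no longer exists after $R_i$, so ``resolving twice at the same point'' never occurs. Likewise ``$R$ and $\Delta$ commute'' should be read as the coderivation identity $\Delta R = (R\otimes \mathrm{id} + \mathrm{id}\otimes R)\Delta$ (your $R\Delta + \Delta R = 0$ does not typecheck, as $\Delta$ lands in a tensor product), and ``$\Delta^2=0$'' as coassociativity; both follow from the same reindexing bookkeeping you describe. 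With these adjustments your outline does establish the lemma, and is in fact more detail than the authors supply.
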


\begin{remark}
\label{Dependence.rmk}
The maps $R$ and $\Delta$  at the chain level depend on several choices such
as  the reparameterization of the string's unit interval domain
and the smooth approximation in Definition \ref{Resolve_i.dfn}.
However, the induced maps on homology are independent of such
choices.
\end{remark}


\begin{cor}
\label{dgca.cor}
$(C_*(\Strata), \pa + R, \Delta)$ is a differential graded co-algebra (dgca). The homotopy type of the dgca is independent of the choices made in its construction.
\end{cor}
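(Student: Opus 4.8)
The plan is to obtain Corollary \ref{dgca.cor} as a formal consequence of Lemma \ref{WellDefined.lma} and Remark \ref{Dependence.rmk}; the only real content lies in the homotopy-invariance assertion. To assemble the dgca axioms, set $D = \pa + R$, a degree $-1$ endomorphism of $C_*(\Strata)$. Since $\pa^2 = 0$, $R^2 = 0$, and $R$ graded-commutes with $\pa$ — so that $\pa R + R\pa = 0$, both operators being odd — Lemma \ref{WellDefined.lma} gives $D^2 = \pa^2 + (\pa R + R\pa) + R^2 = 0$, so $(C_*(\Strata), D)$ is a chain complex. Equip $C_*(\Strata) \otimes C_*(\Strata)$ with the total differential $D_\otimes = D \otimes 1 + (-1)^{|\cdot|}\, 1 \otimes D$. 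Because the Alexander--Whitney map is a chain map and $\Delta$ commutes with both $\pa$ and $R$ (again Lemma \ref{WellDefined.lma}), the map $\Delta = \sum_i (-1)^{i-1}\, AW \circ \Delta_i$ satisfies $\Delta D = D_\otimes \Delta$, i.e.\ it is a chain map for $D$. Finally, coassociativity $(\Delta \otimes 1)\Delta = (1 \otimes \Delta)\Delta$ is the content of ``$\Delta$ squares to zero'' in Lemma \ref{WellDefined.lma} (equivalently, that the cobar differential induced by $\Delta$ squares to zero); at the chain level it comes from coassociativity of $AW$ together with the evident identification of $\Delta_j$ applied after $\Delta_i$ with the splitting of $\omega$ at a pair of internal intersection points, the signs matching the cobar convention. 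This establishes that $(C_*(\Strata), \pa + R, \Delta)$ is a differential graded coalgebra.

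Next I would address the homotopy type. By Remark \ref{Dependence.rmk}, the chain-level data $(R, \Delta)$ depends only on a choice of reparameterizations of the split intervals in Definition \ref{Split_i.dfn} and of smooth approximations in Definition \ref{Resolve_i.dfn}, and the set of such choices is contractible: reparameterizations can be joined by straight-line homotopies in the space of orientation-preserving self-maps of $[0,1]$ fixing $\{0,1\}$, and likewise for the smooth approximations. Given two admissible choices, I would run the entire construction over a path joining them; this produces degree $+1$ operators satisfying $\pa H + H\pa = R_0 - R_1$ and the analogous relation for the two copies of $\Delta$, mutually compatible, so that the identity chain map intertwines the two dgca structures up to coherent homotopy. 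In particular the two resulting dgca's are quasi-isomorphic, so the homotopy type of the dgca is independent of the choices made in its construction.

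I expect the genuine difficulty to be exactly this last point: arranging homotopies that are simultaneously compatible with $D$, with $\Delta$, and with the coassociativity relation — that is, producing an honest equivalence in the homotopy category of differential graded coalgebras (or a $C_\infty$-quasi-isomorphism obtained by homotopy transfer) rather than merely a chain-homotopy equivalence of the underlying complexes. The bookkeeping of the first paragraph is, by contrast, the routine ``straightforward check'' already signalled before Lemma \ref{WellDefined.lma}.
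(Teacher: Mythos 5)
Your proposal is correct and follows essentially the same route as the paper, which offers no explicit proof and presents the corollary as an immediate consequence of Lemma \ref{WellDefined.lma} and Remark \ref{Dependence.rmk}; your first paragraph simply spells out the bookkeeping (correctly reading ``commute'' in the graded sense, so that $\pa R + R\pa = 0$ and $(\pa+R)^2=0$), and your second paragraph fleshes out the independence-of-choices assertion that the paper delegates to the remark. Your closing observation --- that the real content is upgrading chain-homotopy equivalence of the underlying complexes to an equivalence of dg coalgebras --- is a fair identification of what the paper leaves implicit.
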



\subsection{Homology of the cobar}
\label{Cobar.sec}

Label the dgca from Corollary \ref{dgca.cor}
\[
\mathcal{C} := (C_*(\Strata), \pa + R, \Delta).
\]

Let $T^{red}(\cdot)$  denote the non-unital, i.e. reduced, tensor algebra.
Note that everything in $\mathcal{C}$ has positive degree and thus the dgca has no counit.
In this case, the cobar  $\Omega \mathcal{C}$  of $\mathcal{C}$
is the differential graded algebra (dga) 
\[
\Omega \mathcal{C} = (T^{red}(C_*(\Strata))[-1], \pa + R + \Delta, \otimes)
\]
where
$[-1]$ denotes performing a grading shift of $-1,$ $\otimes$ is the tensor product structure, and $\pa + R+\Delta$ is extended via derivation over $\otimes$ as a differential.
Note that from our grading conventions for $C_*(\Strata),$
$\Omega \mathcal{C}$ has only non-negative grading.

\begin{dfn}
Given a ring $R$ with product $*$ and an element $\alpha \in R,$ let the {\bf ring} $\tilde{R}$ {\bf twisted by} $\alpha$ be the set $R$ with product $\tilde{*}$ defined by $x \tilde{*} y := x * \alpha *y.$
\end{dfn}

Fix a point $q$ in the boundary of the tubular neighborhood of the submanifold $K.$
Let 
\[
[m], [e] \in   \pi_1(M \setminus K) := \pi_1(M \setminus K;q)
\]
 denote the homotopy classes of the meridional loop $m$ and the constant loop $e,$ respectively.
The cross-section from the framing of the normal bundle $N$ of $K$ defines a map $\pi_1(K) \rightarrow \pi_1(M \setminus K).$
Let $L$ be the image of this map.

\begin{prop}
\label{H_0.prop}
There is a ring isomorphism  from 
the degree 0 part of the homology of the dga 
$\Omega \mathcal{C}$  to the group ring $\widetilde{\Z \pi_1}(M \setminus K)$ twisted by the element 
$[m] +[e],$ modulo the two-sided ideal generated by 
\[
\{[lx]-[x], [xl] - [x]\,\, | \,\, [x] \in \pi_1(M \setminus K), [l] \in L\}.
\] 
Here $[xl]$ is the homotopy class of the concatenation of the loops $x$ and $l,$ based at $q.$
\end{prop}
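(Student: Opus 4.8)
The plan is to compute $H_0(\Omega\mathcal{C})$ directly at the chain level, using the grading convention to reduce to the two lowest strata, and then to translate the single surviving relation into $\pi_1(M\setminus K)$. Since an $n$-cell in $\Strata_m$ has degree $n+m+1$ and the cobar shifts each tensor factor down by one, a word $c_1\otimes\cdots\otimes c_k$ with $c_j$ an $n_j$-cell in $\Strata_{m_j}$ has degree $\sum_j(n_j+m_j)$; hence $(\Omega\mathcal{C})_0 = T^{red}(C_0(\Strata_0))$, and $(\Omega\mathcal{C})_1$ is spanned by words with exactly one distinguished factor, which is either a $1$-cell in $\Strata_0$ or a $0$-cell in $\Strata_1$, the remaining factors being $0$-cells in $\Strata_0$. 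The differential $\partial+R+\Delta$ vanishes on $(\Omega\mathcal{C})_0$ — a $0$-cell has no topological boundary, and $R$ and $\Delta$ are empty sums on $\Strata_0$ — so $H_0(\Omega\mathcal{C}) = (\Omega\mathcal{C})_0/\mathrm{im}(d_1)$. A distinguished $1$-cell in $\Strata_0$ contributes only its topological boundary, so these relations collapse $T^{red}(C_0(\Strata_0))$ to $T^{red}(\Z[\pi_0\Strata_0])$; a distinguished $0$-cell $\omega\in\Strata_1$ with unique internal intersection at $q\in K$ contributes $d\omega = R_1(\omega)+\Delta_1(\omega) = \big([\omega_e]+[\omega_m]\big)\pm[\omega']\otimes[\omega'']$, where $\omega_e,\omega_m\in\Strata_0$ are the two resolutions of $\omega$ at $q$ and $\omega'\otimes\omega''$ is its split, so in $H_0$ one imposes $[\omega']\otimes[\omega''] = \mp([\omega_e]+[\omega_m])$, including inside longer words.

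Next I would identify the pieces geometrically. Cutting off the radial arcs inside $N$ gives a homotopy equivalence from $\Strata_0$ to the space $P$ of paths in $M\setminus K$ from the section $\Gamma^+(K)$ to the section $\Gamma^-(K)$; evaluation $P\to\Gamma^+(K)\times\Gamma^-(K)\cong K\times K$ is a fibration with fiber homotopy equivalent to $\Omega(M\setminus K)$. Since $K$ is connected, $\pi_0(P)$ is the quotient of $\pi_1(M\setminus K)$ by the monodromy of $\pi_1(K\times K)$, which acts by pre- and post-composition with the images of $\pi_1(K)$ under the two sections, both of which equal $L$; thus $\pi_0(\Strata_0)\cong L\backslash\pi_1(M\setminus K)/L$. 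Fixing a path in $\partial N$ from the basepoint $q$ to $\Gamma^+(q_*)$ for some $q_*\in K$, and the convention that a path from $\Gamma^+(q_*)$ to $\Gamma^-(q_*)$ is closed into a loop by the arc $e(q_*)$, this identifies $\Z[\pi_0\Strata_0]$ with $\Z\pi_1(M\setminus K)$ modulo the ideal $\{[lx]-[x],\,[xl]-[x]\}$ as $\Z$-modules, and realizes the meridian $[m]$ as the loop $\overline{e(q_*)}\,m(q_*)$ going once around the normal circle.

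Now I would identify the surviving relation. Using the $\pi_0$-relations, every $\omega\in\Strata_1$ may be taken to be a concatenation $\sigma*\tau$ of strings in $\Strata_0$ based at $q_*$, so that $\omega'=\sigma$ and $\omega''=\tau$. If $\sigma$ and $\tau$ correspond to classes $s$ and $t$ under the dictionary above, then closing up the two resolutions shows that $\omega_e$ corresponds to $st$ (the $e$-arc contributing nothing, since $[e]=1$) and $\omega_m$ to $s[m]t$ (the $m$-arc contributing one meridian). Hence the surviving relation reads $[\sigma]\cdot[\tau] = \pm\,[\,s([m]+[e])\,t\,]$ in the double-coset quotient, which is exactly the twisted product $[s]\,\tilde{*}\,[t]$. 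Since every such relation expresses a length-$2$ word as a $\Z$-combination of length-$1$ words, induction on word length collapses $T^{red}(\Z[\pi_0\Strata_0])$ onto its length-$1$ part $\Z[L\backslash\pi_1(M\setminus K)/L]$; one then finishes by checking that the evident maps between this ring and $\widetilde{\Z\pi_1}(M\setminus K)$ modulo the stated two-sided ideal are mutually inverse ring homomorphisms (the product $\otimes$ of the cobar corresponds to the projected twisted product by the computation just made).

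The main obstacle, and where I would take the most care, is the second step: pinning down $\pi_0(\Strata_0)\cong L\backslash\pi_1(M\setminus K)/L$ while keeping track of the two antipodal sections $\Gamma^\pm$, of the meridian that is born when a path from $\Gamma^+(q_*)$ to $\Gamma^-(q_*)$ is closed into an honest loop, and of the endpoint-sliding homotopies, which are exactly what produce the two-sided quotient by $L$. A secondary but real nuisance is the Koszul-sign bookkeeping in the cobar differential: one must confirm that the sign relating the $R$-term to the $\Delta$-term — together with the sign in \eqn{R.eq} and the sign in the definition of $\Delta$ — makes the surviving relation come out with $+([m]+[e])$ rather than its negative, in keeping with Remark \ref{Dependence.rmk} that $H_0$ is independent of the smoothing and reparametrization choices.
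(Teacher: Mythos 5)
Your overall strategy coincides with the paper's: identify $(\Omega \mathcal{C})_0$ with words in $0$-cells of $\Strata_0$, enumerate the degree-one words (one distinguished factor which is either a $1$-cell in $\Strata_0$ or a $0$-cell in $\Strata_1$), and read off the three kinds of relations --- endpoint homotopies, the resolve-versus-split relation producing the twist by $[e]+[m]$, and the $L$-slides. Your degree count and your computation of $d\omega$ for a $0$-cell $\omega \in \Strata_1$ are correct and in fact more explicit than the paper's.

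The gap is in the final identification, and it is caused by the order in which you impose the relations. You first quotient by \emph{all} endpoint homotopies, obtaining $T^{red}(\Z[\pi_0\Strata_0])$ with $\pi_0\Strata_0 \cong L\backslash \pi_1(M\setminus K)/L$, and then assert that the splitting relations collapse this onto ``$\Z[L\backslash\pi_1(M\setminus K)/L]$ with the projected twisted product.'' But the twisted product does not descend to the double-coset module: for $l \in L$ the classes $[sl]$ and $[s]$ agree in $L\backslash\pi_1/L$, yet $[sl]\,\tilde{*}\,[t] = [slmt]+[slt]$ while $[s]\,\tilde{*}\,[t] = [smt]+[st]$, and these four terms generically lie in four distinct double cosets because $l$ now sits in the \emph{middle} of the words. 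The differences $([sl]-[s])\,\tilde{*}\,[t]$ are genuine additional relations among length-one words (they are boundaries: the length-two words $[s]\otimes[t]$ and $[sl]\otimes[t]$ differ by a type-(3) slide, and each split-reduces to length-one words by a type-(2) boundary), and they are precisely what makes the correct quotient the \emph{two-sided ideal} of the twisted ring rather than the $\Z$-span of $\{[lx]-[x],\,[xl]-[x]\}$; the latter quotient is strictly larger. So ``this ring'' in your last sentence is not well defined as described, and the mutually-inverse-maps check cannot be run against it. The repair is essentially the paper's ordering: first mod out only by endpoint homotopies keeping endpoints in a contractible subset of $K$ (which yields $T^{red}(\Z\pi_1(M\setminus K))$ on the nose, with no double cosets), then impose the splitting relations to get the twisted group ring, and only then impose the $L$-slides, which at that stage visibly generate the two-sided ideal appearing in the statement.
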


\begin{proof}


The 0-chains of $\Omega \mathcal{C}$ are tensor products of arbitrary length of zero chains of $\Strata_0.$
Any 1-boundary is a linear combination of three types of 1-chains of $\Omega \mathcal{C}:$ 
\begin{enumerate}
\item
tensor products of the above zero-chains and one factor (in the tensor product) which is a smooth 1-family in $\Strata_0$ where the start and endpoints of the strings remain in a contractible set of $K;$ 
\item
tensor products of the above zero-chains and one factor which is a string with one internal intersection; and,
\item
tensor products of the above zero-chains and one factor which is a smooth 1-family in $\Omega_0$ where either the start or endpoints of the strings traces out a representative of 
$[l] \in L.$
\end{enumerate}


From the first type of 1-boundary, it is clear that we can choose a set of zero-chains (cycles)
which generate $H_0(\Omega \mathcal{C})$ and which are represented by strings in $St_0$ starting and ending at $q.$ 
Moreover, if we mod out these zero-chains by 1-boundaries of the first type we get an isomorphism from the algebra $A_1$ generated by these equivalence classes to the reduced tensor algebra of the vector space generated by $\pi_1(M \setminus K).$



Let $A_2$ be the algebra generated by equivalence classes of elements of $A_1$ where
two generators of $A_1$ are equivalent if they differ by a 1-boundary of the second type. The map from the previous paragraph then defines an algebra isomorphism from $A_2$ to the group ring $\widetilde{\Z\pi_1}(M \setminus K),$ twisted by the element $[e]+[m].$

Note that $H_0(\Omega \mathcal{C})$ is $A_2$ mod the image of the 1-boundaries of the third type; therefore $H_0(\Omega \mathcal{C})$ is isomorphic to  $\widetilde{\Z \pi_1}(M \setminus K)$ after dividing by the ideal in the proposition's statement.
\end{proof}

\subsection{A string algebra over the group ring of an infinite cyclic group}
\label{StringAlgebra.sec}

Let $\mathfrak{e}$ and $\bar{\mathfrak{m}}$ denote the elements in 
$H_0(\Omega \mathcal{C})$ which correspond to the equivalence classes of $[e]$ and $[\bar{m}]$ under the isomorphism from  Proposition \ref{H_0.prop}. Here $\bar{x}$ denotes the inverse of the loop $x.$
A priori, the isomorphism depends on what homotopy one uses in the proof of 
Proposition \ref{H_0.prop} to represent elements in $\Strata_0$ as loops based at $q \in K.$
However, since $[e]$ and $[\bar{m}]$ are in the center of the image of the peripheral group,
$\mathfrak{e}$ and $\bar{\mathfrak{m}}$ are independent of these choices.

The cobar construction $\Omega \mathcal{C}$ has no identity element.
We will formally add the identity element, although this is a ``benign"\footnote{It is a formal operation that can be done on a ring and commutes with all other constructions (on the ring) that we do here.} operation in that it can be performed at essentially any point in this note.

\begin{dfn}
Given any ring $R,$ with or without an identity element, let $R(1)$ be the ring
enlarged to include the identity element $1.$ 
That is, $R(1) = \Z \oplus R.$ 
Here the product $*$ on $R(1)$ is the usual
$(a, r) * (b, s) = (ab, as+br+rs)$
for  $r,s \in R$ and integers $a,b.$ 
\end{dfn}

\begin{dfn}
\label{StringAlgebra.dfn}
Define the {\bf{string algebra}} to be the degree zero homology of the cobar construction, $H_0(\Omega \mathcal{C})$
with identity element, modulo placing the elements $\mathfrak{e}$ 
and $\bar{\mathfrak{m}}$ in the center
\[
A_{St} = \frac{H_0(\Omega \mathcal{C})(1)}{[\mathfrak{e}, \cdot], [\bar{\mathfrak{m}}, \cdot]}.
\]
\end{dfn}

\begin{figure}[h]
\begin{center}
\includegraphics[scale=0.5]{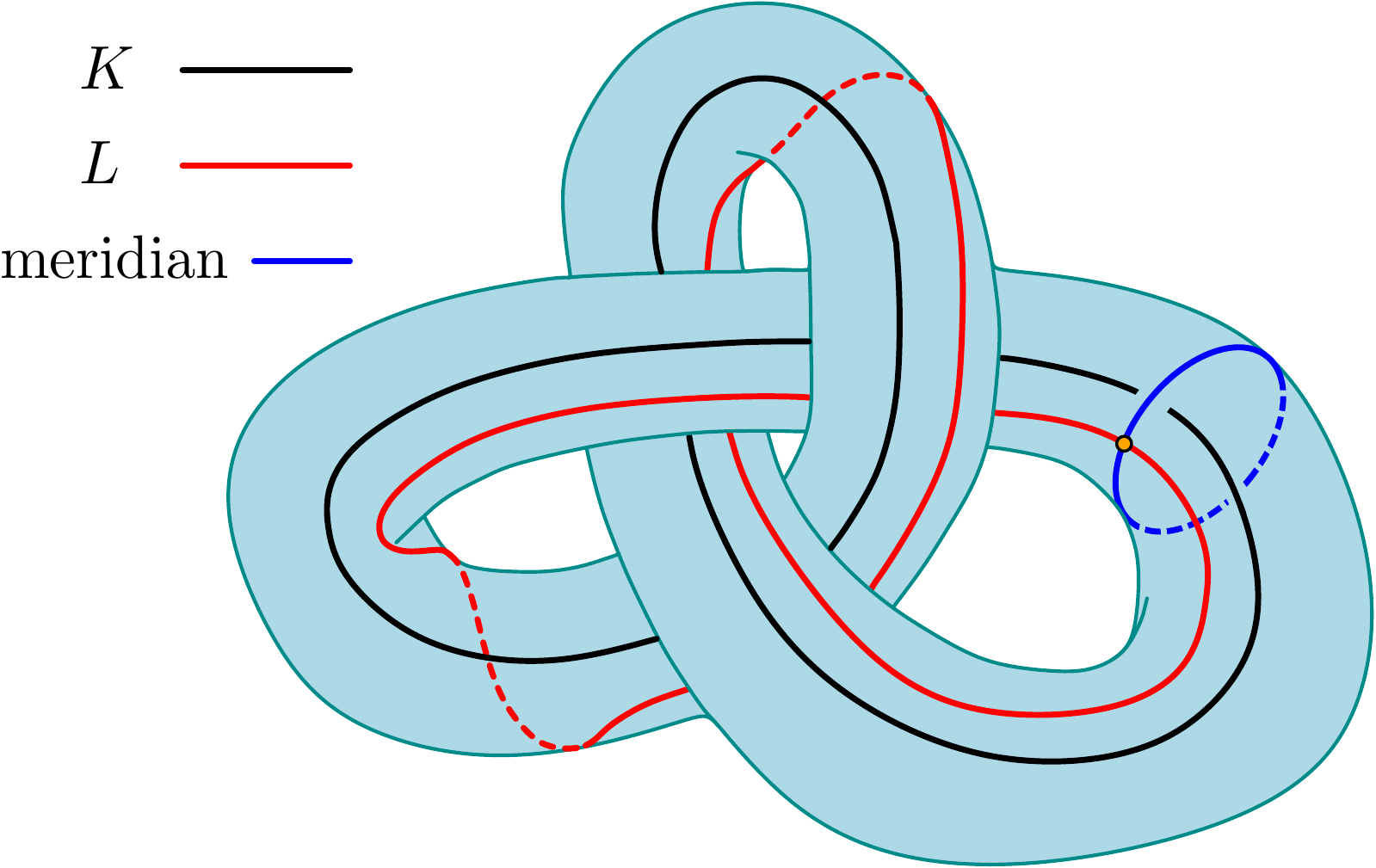}\vspace{0.2cm}
\caption{The case of a framed knot $K$ in $\R^3$.}
\end{center}\vspace*{-0.3cm}
\end{figure}

\begin{prop}
\label{Ast.prop}
$A_{St}$ is isomorphic to 
$\widetilde{\Z \pi_1}(M \setminus K)(1),$  twisted by the element $[e] + [m],$ modulo the two-sided ideals generated by
\begin{eqnarray*}
&\left\{[lx]-[x], [xl] - [x]\,\, \left| \,\, [x] \in \pi_1(M \setminus K),  [l] \in L\right\}, \right. & 
\mbox{and}\\
&\left\{[mx]-[xm]\,\, \left| \,\, [x] \in \pi_1(M \setminus K)\right\} \right. . &
\end{eqnarray*}
\end{prop}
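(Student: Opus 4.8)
The statement of Proposition~\ref{Ast.prop} is essentially a bookkeeping consequence of Proposition~\ref{H_0.prop} together with Definition~\ref{StringAlgebra.dfn}, so the proof should simply unwind the two definitions and compare the extra relations. I would proceed as follows. First, by Definition~\ref{StringAlgebra.dfn}, $A_{St} = H_0(\Omega\mathcal{C})(1) / ([\mathfrak{e},\cdot],[\bar{\mathfrak{m}},\cdot])$. By Proposition~\ref{H_0.prop}, $H_0(\Omega\mathcal{C})$ is isomorphic, as a ring, to $\widetilde{\Z\pi_1}(M\setminus K)$ twisted by $[m]+[e]$ modulo the two-sided ideal generated by the peripheral relations $\{[lx]-[x],\,[xl]-[x] \mid [x]\in\pi_1(M\setminus K),\,[l]\in L\}$. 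Adjoining an identity commutes with forming this quotient (it is the ``benign'' operation of Definition~\ref{StringAlgebra.dfn}), so $H_0(\Omega\mathcal{C})(1)$ is $\widetilde{\Z\pi_1}(M\setminus K)(1)$ twisted by $[m]+[e]$ modulo the same peripheral ideal.

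\textbf{Identifying the central relations.} The only remaining point is to show that modding out by the commutators $[\mathfrak{e},\cdot]$ and $[\bar{\mathfrak{m}},\cdot]$ in the twisted ring produces exactly the relation $\{[mx]-[xm] \mid [x]\in\pi_1(M\setminus K)\}$. I would argue this in the twisted product $\tilde{*}$, where $x\,\tilde{*}\,y = x*([m]+[e])*y$. Requiring $\mathfrak{e}$ central means $[e]\,\tilde{*}\,[x] = [x]\,\tilde{*}\,[e]$, i.e. $[e]*([m]+[e])*[x] = [x]*([m]+[e])*[e]$, which simplifies to $([m]+[e])*[x] = [x]*([m]+[e])$, i.e. $[mx]+[x] = [xm]+[x]$, hence $[mx] = [xm]$. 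One should check that requiring $\bar{\mathfrak{m}}$ central gives the same relation and no more: $[\bar m]\,\tilde{*}\,[x] = [x]\,\tilde{*}\,[\bar m]$ reads $[\bar m]*([m]+[e])*[x] = [x]*([m]+[e])*[\bar m]$, i.e. $([e]+[\bar m])*[x] = [x]*([e]+[\bar m])$, which is again $[x] + [\bar m x] = [x] + [x\bar m]$, i.e. $[\bar m x] = [x \bar m]$; conjugating by $[m]$ on both sides (which is legitimate since $[m]$ is a unit in the group ring before quotienting, or by replacing $[x]$ with $[mxm]$) shows this is equivalent to $[mx] = [xm]$. Finally I would note that since $[e]$ is the identity of the untwisted group ring, $[\mathfrak e]$ acts in the twisted ring as multiplication by $[m]+[e]$; in $A_{St}(1)$ this becomes a genuine unit once the identity $1$ is adjoined, and the centrality imposed is the natural compatibility making the twist by $[m]+[e]$ ``undoable'' up to the relation $[mx]=[xm]$.

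\textbf{Main obstacle.} The only genuinely delicate point is verifying that the two commutator relations $[\mathfrak{e},\cdot]$ and $[\bar{\mathfrak{m}},\cdot]$, imposed in the \emph{twisted} ring, do not introduce anything beyond $[mx]=[xm]$ — in particular that they do not collapse the peripheral structure further or force $[e]$ or $[m]$ to act trivially. This requires being careful about the interaction between the twisted product and the quotient: one must check the computation above is consistent, i.e. that $[mx]=[xm]$ for all $x$ is preserved by $\tilde{*}$ and by the peripheral ideal (it is, since $[m]$ is central in the peripheral subgroup and $L$ commutes with $[m]$ as well, both being in the image of $\pi_1(K)$ which is abelian-by-... — more simply, $[m]$ is central in the peripheral group, so the peripheral relations are unaffected). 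Once that consistency is in place, the remaining identifications are routine substitutions, and the proposition follows by assembling Proposition~\ref{H_0.prop}, the benign adjunction of $1$, and the commutator computation.
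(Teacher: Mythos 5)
Your argument is correct and follows essentially the same route as the paper: Proposition~\ref{H_0.prop} plus the observation that the commutator ideals $[\mathfrak{e},\cdot]$ and $[\bar{\mathfrak{m}},\cdot]$ in the twisted ring reduce to $[mx]-[xm]$, which is exactly the content and computation of the paper's Lemma~\ref{eCommute.lma} (your explicit substitution $x\mapsto mxm$ showing the $\bar m$-relations give nothing new is a point the paper leaves implicit). The only inaccuracy is the inessential closing aside that $\mathfrak{e}$ becomes "a genuine unit" after adjoining $1$ --- it is $\mathfrak{e}-1$ (with inverse $\bar{\mathfrak{m}}-1$) that is the unit, as in Proposition~\ref{AstCoefficients.prop} --- but nothing in your proof depends on this.
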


This result follows from Proposition \ref{H_0.prop} and Lemma \ref{eCommute.lma}  below.

\begin{lma}
\label{eCommute.lma}
Consider the group ring $\widetilde{\Z \pi_1}(M \setminus K)(1)$
twisted by the element $[e] + [m].$ 
The two-sided ideals generated by 
\[
\left\{[mx]-[xm]\,\, \left| \,\, [x] \in \pi_1(M \setminus K)\right\} \right. \quad \mbox{and}
\]
\[
\left\{[x] \tilde{*} [e] -[e] \tilde{*} [x] \,\, \left| \,\, [x] \in \pi_1(M \setminus K) \right\}. \right.
\] 
are the same; similarly the two-sided ideals generated by 
\[
\left\{[\bar{m}x]-[x\bar{m}]\,\, \left| \,\, [x] \in \pi_1(M \setminus K)\right\} \right. \quad \mbox{and}
\]
\[
\left\{[x] \tilde{*} [\bar{m}] -[\bar{m}] \tilde{*} [x] \,\, \left| \,\, [x] \in \pi_1(M \setminus K)\right\}. \right. 
\] 
are the same.
\end{lma}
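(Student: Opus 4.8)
The plan is to exploit the explicit form of the twisted product $\tilde{*}$ to rewrite the two generating sets in terms of one another. Recall that in $\widetilde{\Z\pi_1}(M\setminus K)(1)$ the twisted product is $[x]\,\tilde{*}\,[y] = [x]*([e]+[m])*[y] = [xy]+[xmy]$, where $*$ is the ordinary group-ring product and I write $[xy]$ for the class of the concatenated loop. Since $[e]$ is the identity for $*$, one computes directly
\[
[x]\,\tilde{*}\,[e] - [e]\,\tilde{*}\,[x] = \bigl([x] + [xm]\bigr) - \bigl([x] + [mx]\bigr) = [xm] - [mx].
\]
Thus each generator $[x]\,\tilde{*}\,[e] - [e]\,\tilde{*}\,[x]$ of the second ideal is literally equal to the negative of the generator $[mx] - [xm]$ of the first ideal, for the same $[x]$; so the two generating \emph{sets} coincide up to sign, and hence generate the same two-sided ideal. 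For the $\bar{m}$ statement the computation is identical with $[m]$ replaced by $[\bar m]$: $[x]\,\tilde{*}\,[\bar m] - [\bar m]\,\tilde{*}\,[x] = [x\bar m] - [\bar m x]$, so again the two generating sets agree up to sign.

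The one point requiring a remark is that I should check the identities hold for $[x]$ ranging over the specified set and that the ideals are taken two-sided in $(\,\cdot\,,\tilde{*})$, not in $(\,\cdot\,,*)$; but since I am only claiming the two \emph{sets of generators} coincide (up to sign), and a two-sided ideal depends only on its generating set and the ambient ring, this is automatic once the elementwise identity above is established. No genuine obstacle arises here: the content of the lemma is just the observation that, because the twist is by $[e]+[m]$ and $[e]$ is a unit, the twisted commutator with $[e]$ unwinds to an ordinary commutator with $[m]$ (resp. $[\bar m]$). The only thing to be careful about is bookkeeping with the twist — making sure one consistently inserts the factor $[e]+[m]$ between the two arguments of every $\tilde{*}$ — which is routine.

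I expect the ``hard'' part, such as it is, to be purely expository: stating clearly that $\mathfrak{e}$ corresponds to $[e]$ (the class of the constant loop, which is the $*$-identity even though it is \emph{not} the $\tilde{*}$-identity) and that $\bar{\mathfrak m}$ corresponds to $[\bar m]$, so that ``placing $\mathfrak e$ and $\bar{\mathfrak m}$ in the center'' of $H_0(\Omega\mathcal C)(1)$ — i.e.\ quotienting by the $\tilde{*}$-commutator ideals $[\mathfrak e,\cdot]$ and $[\bar{\mathfrak m},\cdot]$ — is, via the isomorphism of Proposition \ref{H_0.prop}, exactly quotienting by the ideal generated by $\{[mx]-[xm]\}$ (the $\bar m$-commutator ideal coincides with the $m$-commutator ideal since $[\bar m]$ is a unit times $[m]$ under $*$, or more simply since conjugation-invariance by $m$ is equivalent to conjugation-invariance by $\bar m$). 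Combining this identification with Proposition \ref{H_0.prop} then yields Proposition \ref{Ast.prop}.
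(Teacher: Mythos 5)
Your proof is correct and follows essentially the same route as the paper: expand the twisted product $[x]\,\tilde{*}\,[e]-[e]\,\tilde{*}\,[x]=[xm]-[mx]$ (the paper writes the commutator with the opposite sign, which is immaterial for the generated ideal) and conclude that the generating sets agree, with the analogous computation for $[\bar m]$. The additional remarks on how the lemma feeds into Proposition \ref{Ast.prop} are accurate but not needed for the lemma itself.
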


\begin{proof}
Note that 
\[
[e]\tilde{*}[x] - [x]\tilde{*}[e] = [x] + [mx]  - ([x] + [xm])  = [mx] - [xm]
\]
So the generating set of the ideals match.
A similar computation justifies the second statement.
\end{proof}

Next we show how $A_{St}$ is in fact an algebra over a larger ring of coefficients.

\begin{prop}
\label{AstCoefficients.prop}
The algebra $A_{St}$ is an algebra over 
$\Z[\mu^{\pm 1}]$, the group ring of an infinite cyclic group.
\end{prop}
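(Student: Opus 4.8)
The plan is to exhibit an explicit central unit of $A_{St}$ and let it play the role of $\mu$. By Proposition~\ref{Ast.prop} together with Definition~\ref{StringAlgebra.dfn}, $A_{St}$ is the unitalization $R(1)$ of the twisted group ring $R = \widetilde{\Z\pi_1}(M\setminus K)$ — twisted by $\alpha = [e]+[m]$, where $[e]$ is the class of the constant loop and hence the multiplicative identity of the \emph{untwisted} group ring — after imposing the peripheral relations and placing $\mathfrak{e}$ and $\bar{\mathfrak{m}}$ in the center. My candidate is
\[
\mu := 1 - \mathfrak{e} \in A_{St}.
\]

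The first step is to verify that $\mu$ is invertible, with $\mu^{-1} = 1 - \bar{\mathfrak{m}}$. This is a short computation with the twisted product in $R(1)$: since $m\bar m = e$ one has $\mathfrak{e}\,\tilde{*}\,\bar{\mathfrak{m}} = [e]*([e]+[m])*[\bar m] = [\bar m] + [e] = \bar{\mathfrak{m}} + \mathfrak{e}$, so expanding gives $(1-\mathfrak{e})(1-\bar{\mathfrak{m}}) = 1 - \bar{\mathfrak{m}} - \mathfrak{e} + \mathfrak{e}\,\tilde{*}\,\bar{\mathfrak{m}} = 1$, and the symmetric computation gives $(1-\bar{\mathfrak{m}})(1-\mathfrak{e}) = 1$ as well. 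Note this identity already holds in the twisted group ring before passing to the quotient by the peripheral ideal, so it survives in $A_{St}$.

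The second step is to observe that $\mu$ and $\mu^{-1}$ lie in the center of $A_{St}$: this is precisely what Definition~\ref{StringAlgebra.dfn} arranges, since $\mathfrak{e}$ and $\bar{\mathfrak{m}}$ are forced to commute with everything (and for the twisted product $[e]$ is not automatically central, which is exactly why that quotient is taken). Consequently the assignment $\mu \mapsto 1 - \mathfrak{e}$ extends to a unital ring homomorphism $\Z[\mu^{\pm 1}] \to Z(A_{St})$ into the center, which is exactly the datum of a $\Z[\mu^{\pm 1}]$-algebra structure on $A_{St}$. If one further wants $\Z[\mu^{\pm 1}]$ to be genuinely the group ring of an infinite cyclic group mapping in faithfully, one checks that $\mu$ has infinite order — for instance via the abelianization, or by appealing to the identification in Section~\ref{Ng.sec} with a specialization of Ng's cord algebra, where $1-\mathfrak{e}$ corresponds to the meridian.

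I do not expect a real obstacle here: the only thing to see is the correct unit, and the unitalization together with the centralization of $\mathfrak{e}$ and $\bar{\mathfrak{m}}$ in Definition~\ref{StringAlgebra.dfn} were set up precisely so that $1-\mathfrak{e}$ would be a central unit. The one mild subtlety worth flagging in the writeup is the asymmetry introduced by twisting by $[e]+[m]$: the inverse of $1-\mathfrak{e}$ is $1-\bar{\mathfrak{m}}$ rather than $1-\mathfrak{m}$, and $1-\mathfrak{e}$ must not be confused with $1-\mathfrak{m}$, which is not invertible in general.
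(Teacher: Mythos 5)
Your proposal is correct and is essentially the paper's own argument: the paper likewise defines the $\Z[\mu^{\pm 1}]$-structure by sending $\mu$ to the central element $\mathfrak{e}-1$ (with inverse $\bar{\mathfrak{m}}-1$), verifies invertibility via the same computation $\mathfrak{e}\,\tilde{*}\,\bar{\mathfrak{m}}=[e]+[\bar m]$, and invokes the centrality built into Definition~\ref{StringAlgebra.dfn}. The only difference is your sign choice $\mu\mapsto 1-\mathfrak{e}$ rather than $\mathfrak{e}-1$; both give valid $\Z[\mu^{\pm1}]$-algebra structures (they differ by the automorphism $\mu\mapsto-\mu$), but the paper's sign is the one matching Ng's relation $[e]=1+\mu$ used in Theorem~\ref{MainTheorem.thm}.
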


\begin{proof}

Define the map $f: \Z[\mu^{\pm 1}] \rightarrow A_{St}$
on the generators
\[
f(\mu) = \mathfrak{e} - 1, \quad f(\mu^{-1}) = \bar{\mathfrak{m}} - 1
\]
and extend it as a ring homomorphism.
The map is well-defined because it preserves the one relation
\[
f(\mu) f (\mu^{-1}) = (\mathfrak{e} - 1) \tilde{*}(\bar{\mathfrak{m}} -1) = [ee\bar{m}] + [em\bar{m}] - [e] - [\bar{m}] + 1 = 1,
\]
where here we use the isomorphism from Proposition \ref{H_0.prop} to identify $[e]$ with  $\mathfrak{e}$ and $[\bar{m}]$ with $\bar{\mathfrak{m}}.$
Since $\mathfrak{e}$ and  $\bar{\mathfrak{m}}$ lie in the center of $A_{St}$ we are done.
\end{proof}

\subsection{An Example}
\label{Example.sec}

We end this section with an example.
\begin{prop}
\label{Unknot.prop}
The string algebra $A_{St}$ of the unknot $K$ in $M=\R^3$
is the group ring of an infinite cyclic group.
\end{prop}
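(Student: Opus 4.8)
The plan is to apply Proposition~\ref{Ast.prop} directly: we must compute $\widetilde{\Z\pi_1}(\R^3\setminus K)(1)$, twisted by $[e]+[m]$, modulo the two ideals, in the special case where $K$ is the unknot. First I would record the two standard facts about the unknot complement in $\R^3$ (equivalently $S^3$ minus an open tubular neighborhood): the complement is homotopy equivalent to a solid torus, so $\pi_1(\R^3\setminus K)\cong\Z$, generated by the meridian $m$; and the peripheral subgroup $L$ — the image of $\pi_1(K)\cong\Z$ under the section coming from the $0$-framing — is generated by the longitude, which in the unknot case is null-homotopic in the complement, so $L=\{e\}$ is trivial. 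With $L$ trivial, the relations $[lx]-[x]$ and $[xl]-[x]$ are vacuous, so the first ideal in Proposition~\ref{Ast.prop} disappears.

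Next I would unwind the twisted group ring. Since $\pi_1(\R^3\setminus K)\cong\Z=\langle m\rangle$ is abelian, the commutator relations $[mx]-[xm]$ in the second ideal are automatically satisfied, so that ideal is also trivial. Thus $A_{St}$ is just $\widetilde{\Z\langle m\rangle}(1)$, the integral group ring $\Z[\Z]=\Z[t^{\pm 1}]$ (with $t:=[m]$, $[e]=1$) equipped with the twisted product $x\,\tilde{*}\,y = x*(1+t)*y$, and then freely adjoining an identity element via the $R\mapsto R(1)$ construction. The remaining task is to identify this ring abstractly as $\Z[\mu^{\pm 1}]$.

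For that identification I would use the homomorphism $f$ from the proof of Proposition~\ref{AstCoefficients.prop}: it sends $\Z[\mu^{\pm1}]$ into $A_{St}$ via $f(\mu)=\mathfrak{e}-1=[e]-1$ and $f(\mu^{-1})=\bar{\mathfrak{m}}-1=[\bar m]-1$. Proposition~\ref{AstCoefficients.prop} already guarantees $f$ is a well-defined ring map making $A_{St}$ an algebra over $\Z[\mu^{\pm1}]$; here I claim $f$ is an isomorphism. Surjectivity is clear once one checks that $[m]$ (hence every power of $[m]$, hence all of the twisted group ring) lies in the image: indeed $[m]$ is a unit-translate of the image elements, and the new formal identity $1$ is $f(1)$. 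For injectivity I would exhibit the inverse explicitly, checking on the free $\Z$-module with basis the powers of $[m]$ that $f$ is a bijection; concretely, the twist by $[e]+[m]=1+[m]$ together with the identity-adjunction turns the monoid generated by $\mathfrak{e}-1$ and $\bar{\mathfrak{m}}-1$ into a free cyclic group, with $\mathfrak{e}-1$ and $\bar{\mathfrak{m}}-1$ mutual inverses by the computation in Proposition~\ref{AstCoefficients.prop}.

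The main obstacle is bookkeeping rather than conceptual: one must be careful that the twisted product $\tilde{*}$ and the formal adjunction of a unit interact correctly, since $A_{St}$ is \emph{not} the ordinary group ring $\Z[t^{\pm1}]$ on the nose but its twist-plus-unitalization, and the natural ``coordinates'' are the shifted elements $\mathfrak{e}-1,\ \bar{\mathfrak{m}}-1$ rather than $[e],[\bar m]$ themselves. Once one works in these shifted coordinates the relation $(\mathfrak{e}-1)\,\tilde{*}\,(\bar{\mathfrak{m}}-1)=1$ from Proposition~\ref{AstCoefficients.prop} shows the generator is invertible with no further relations, so $A_{St}\cong\Z[\mu^{\pm1}]$, completing the proof.
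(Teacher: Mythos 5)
Your reduction is exactly the paper's: Proposition \ref{Ast.prop} together with the observations that $L$ is trivial and $\pi_1(\R^3\setminus K)\cong\Z$ is abelian kills both ideals, and the identification is then made by showing the map $f$ of Proposition \ref{AstCoefficients.prop} is an isomorphism. Your surjectivity sketch is sound and matches the paper in substance: $[e]=f(\mu)+1$, $[m]=[e]\,\tilde{*}\,[e]-[e]$, and inductively $[m^{k+1}]=[m^{k}]\,\tilde{*}\,[e]-[m^{k}]$ (with the mirror computation for $[\bar m^{k}]$), so the image of $f$, being a $\tilde{*}$-subring containing $1$, is everything.

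The genuine soft spot is injectivity. You assert that the twist ``turns the monoid generated by $\mathfrak{e}-1$ and $\bar{\mathfrak{m}}-1$ into a free cyclic group \dots with no further relations,'' but ``no further relations'' is precisely the content of the proposition, and the criterion you invoke is not even sufficient: a ring map $\Z[\mu^{\pm 1}]\to R$ sending $\mu$ to a unit of infinite multiplicative order can still have a kernel (send $\mu$ to $2\in\mathbb{Q}$). What is actually needed is $\Z$-linear independence of the elements $f(\mu^{k})$, $k\in\Z$, and this is where the paper does its only real work: from $[m^{k}]=[m^{k-1}]\,\tilde{*}\,[e]-[m^{k-1}]$ one gets the triangular expression $f(\mu^{k})=\sum_{i=0}^{k}(-1)^{k-i}m\{i\}$ (and its analogue for negative powers), so $f$ maps the span of $\{1,\mu^{\pm 1},\dots,\mu^{\pm k}\}$ onto the span of $\{1,[e],[m],\dots,[m^{k-1}],[\bar m],\dots,[\bar m^{k}]\}$ bijectively for each $k$, and exhausting this filtration finishes the proof. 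Your stated plan of ``checking on the free $\Z$-module with basis the powers of $[m]$'' is the right instinct and would produce exactly this triangularity argument, but as written the proposal stops one step short of it and substitutes an insufficient group-theoretic criterion.
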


\begin{proof}

Note that the ideal that we divide out by in the statement of Proposition \ref{Ast.prop} are trivial in the case of the unknot: $L$ is the empty set and $\pi_1(M\setminus K)$ is commutative so $[mx] - [xm] = 0.$
Thus $A_{St}$ is $\widetilde{\Z \pi_1}(M \setminus K)(1)$
twisted by $[e] + [m].$ 

Consider the following elements which generate $A_{St}$ additively: 
\begin{eqnarray*}
& m\{1\} = [e],  m\{2\} = [m], m\{3\} = [m^2], m\{4\} = [m^3], \ldots, & \\
&\bar{m}\{1\} = [\bar{m}], \bar{m}\{2\} = [\bar{m}^2], \bar{m}\{3\} = [\bar{m}^3], \ldots, &\\
&m\{0\} = \bar{m}\{0\} = 1.&
\end{eqnarray*}
Note that these elements are linearly independent in the underlying vector space of $\Z \pi_1(M \setminus K)(1),$
and hence in the (same) space which underlies $\widetilde{\Z \pi_1}(M \setminus K)(1).$


For a fixed $k \ge 1,$ define $\Z[\mu^{\pm 1}]\{k\}$ to be the linear vector space spanned by $1, \mu^1, \ldots, \mu^k, \mu^{-1}, \ldots, \mu^{-k}$. Similarly, define $A_{St}\{k\}$ to be the linear vector space spanned by $1, m\{1\}, \ldots, m\{k\}, \bar{m}\{1\}, \ldots, \bar{m}\{k\}$.

One can compute that
\[
m\{k+1\} = m\{k\} \tilde{*} (m\{1\} - 1), \quad
\bar{m}\{k+1\} = \bar{m}\{k\}\tilde{*} (\bar{m}\{1\}  - 1).
\]
from which it follows
\begin{equation}
\notag
f(\mu^k) = \sum_{i=0}^k (-1)^{k-i} m\{i\}, \quad
f(\mu^{-k}) = \sum_{i=0}^k (-1)^{k-i} \bar{m}\{i\}
\end{equation}
for the map $f: \Z[\mu^{\pm}] \rightarrow A_{St}$ defined in Proposition \ref{AstCoefficients.prop}.
Thus, we get the restriction  $f|_{ \Z[\mu^{\pm 1}]\{k\}}$ has range $A_{St}\{k\}.$
Since $f(\mu^{k-1}(\mu - 1)) = m\{k\}$ and 
$f(\mu^{-(k-1)}(\mu^{-1} - 1)) = \bar{m}\{k\},$ this restriction function is
onto. Thus, the restriction is a linear isomorphism of $(2k+1)$-dimensional vector spaces.

Since $\Z[\mu^{\pm 1}]\{1\} \subset \Z[\mu^{\pm 1}]\{2\} \subset \ldots \subset \Z[\mu^{\pm 1}]$ and
$A_{St}\{1\} \subset A_{St}\{2\} \subset \ldots \subset A_{St}$ are exhaustive
inclusions of the underlying vector spaces, the isomorphisms for each $k$ prove that $f$ is a ring isomorphism. 

\end{proof}

\section{Comparing the string algebra and the Ng cord algebra}
\label{Ng.sec}
In this section we review 
 Ng's fundamental group presentations of the cord algebra and prove the
 Main Theorem \ref{MainTheorem.thm}, which is to equate it with the string algebra $A_{St}$ from Section \ref{StringAlgebra.sec}.
All manifolds are assumed to be connected.
 
\subsection{Fundamental group formulation of the Ng cord algebra}
\label{FungamentalGroupFormulation.sec}


For a knot in $\R^3$, Ng constructed a combinatorial DGA over $\Z[\lambda^\pm, \mu^\pm, Q^\pm]$ whose homology is equivalent to knot contact homology, a version of relative symplectic field theory which is defined using pseudo-holomorphic curves in symplectic manifolds \cite{EENS}.
In \cite{Ng2} he interprets the degree zero homology with $\Z$-coefficients in terms of an algebra generated by cords divided by an ideal generated by skein relations.
Ng and Gadgil in \cite[Appendix]{Ng2} describe a fundamental group presentation of the cord algebra, which may be applied to any codimension 2 submanifold $K$ of $M$.
In \cite[Definition 2.2]{Ng3} Ng extends this presentation to define the cord algebra over $\Z[H_1(\partial N)]$ where $N$ is a tubular neighborhood of $K$.
For a knot in $\R^3$ this presentation gives the $Q=1$ specialization of the degree zero part of knot contact homology.

We review below the construction of the cord algebra for a codimension 2 submanifold $K$ of $M$.
Instead of working with $\Z[H_1(\partial N)]$-coefficients, we work in the specialization to $\Z[\mu^\pm]$-coefficients, where $\mu$ is the homology class of a fiber of $\partial N \to K$.
Note that in the knot case, this theory is still more general than the cord algebra with $\Z$-coefficients, which is sufficient to distinguish knots indistinguishable by those invariants listed in Corollary \ref{NgComputation.cor}.

\begin{dfn}
\label{CordAlgebra.dfn}
Let $V_\mu$ be the free module generated by $\pi_1(M \setminus K)$ over the group ring $\Z[\mu^{\pm 1}].$
Let $T(V_\mu)$ denote the unital tensor algebra of this module. 
Let $[x],[y]$ be arbitrary elements in $\pi_1(M \setminus K).$
Define the (Ng) {\bf cord algebra} $A_{Ng}( K)$
of the submanifold as $T(V_\mu)$ modulo the relations
\begin{eqnarray}
\label{key.eq}
&[x y] + [x m y] =
[x] \otimes [y]& \\
\label{longitude.eq}
&[lx] = [xl] = x&\\
\label{meridian1.eq}
&[mx] = [xm], \,\, [\bar{m}x] = [x\bar{m}] &\\
\label{weird.eq}
&[e] =1 + \mu& \\
\label{meridian2.eq}
&[mx]  =\mu [x], \,\,[\bar{m}x] = \mu^{-1}[x]&
\end{eqnarray}
Here $e$, $m$ and $l$ are as defined in Section \ref{Cobar.sec}.
\end{dfn}
We note that not all the relations in Definition \ref{CordAlgebra.dfn} are independent. 
For example, relation (\ref{meridian2.eq}) follows from relations (\ref{key.eq}) and (\ref{weird.eq}).
Such a  presentation, however, will be useful when comparing the cord algebra to string topology constructions.

\subsection{Main Theorem}
\label{MainTheorem.sec}

\begin{thm}
\label{MainTheorem.thm}
The cord algebra $A_{Ng}$ and string algebra $A_{St}$ are isomorphic algebras over the group ring of an infinite cyclic group.
\end{thm}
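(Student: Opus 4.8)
The plan is to compare the two explicit presentations produced earlier in the paper. By Proposition~\ref{Ast.prop} we already know $A_{St}$ as a quotient of $\widetilde{\Z\pi_1}(M\setminus K)(1)$ (twisted by $[e]+[m]$) by the longitude relations and the meridian-centrality relations, and by Proposition~\ref{AstCoefficients.prop} it carries a $\Z[\mu^{\pm1}]$-algebra structure via $f(\mu)=\mathfrak{e}-1$, $f(\mu^{-1})=\bar{\mathfrak{m}}-1$. On the Ng side, Definition~\ref{CordAlgebra.dfn} presents $A_{Ng}$ as the unital tensor algebra $T(V_\mu)$ over $\Z[\mu^{\pm1}]$ modulo the relations \eqref{key.eq}--\eqref{meridian2.eq}. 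So the strategy is: write down a candidate $\Z[\mu^{\pm1}]$-algebra map in one direction, check it respects all defining relations on both sides, and exhibit an inverse.

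First I would unwind the twisted product. In $\widetilde{\Z\pi_1}(M\setminus K)$ twisted by $\alpha=[e]+[m]$ one has $[x]\,\tilde\ast\,[y]=[x]\ast([e]+[m])\ast[y]=[xy]+[xmy]$ (using $[e]$ acts trivially). Hence the associative product of the underlying group ring is recovered from $\tilde\ast$, and conversely the defining relation \eqref{key.eq}, $[xy]+[xmy]=[x]\otimes[y]$, is exactly the statement that the tensor product in $T(V_\mu)$ maps to the twisted product $\tilde\ast$. This identifies the natural map $T(V_\mu)\to A_{St}$, $[x]\mapsto[x]$, as the candidate isomorphism; I would define it on generators $[x]\in\pi_1(M\setminus K)$ and on $\mu^{\pm1}$ via $\mu\mapsto\mathfrak{e}-1$, $\mu^{-1}\mapsto\bar{\mathfrak m}-1$, matching the $f$ of Proposition~\ref{AstCoefficients.prop}, so that the map is automatically $\Z[\mu^{\pm1}]$-linear.

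Next I would check that every relation \eqref{key.eq}--\eqref{meridian2.eq} holds in $A_{St}$. Relation \eqref{key.eq} is the unwinding above. Relation \eqref{longitude.eq}, $[lx]=[xl]=x$ for $[l]\in L$, is precisely the first ideal divided out in Proposition~\ref{Ast.prop} (note $x$ on the right means $[x]$, the class carrying no extra basepoint twist — one must be slightly careful that $[lx]$ in the group ring equals $[x]$ and not $[x]\tilde\ast 1$, but these agree). Relation \eqref{meridian1.eq}, $[mx]=[xm]$, is the second ideal in Proposition~\ref{Ast.prop}; its $\bar m$-version follows from Lemma~\ref{eCommute.lma}, since centrality of $[m]$ forces centrality of $[m^{-1}]=[\bar m]$. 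Relation \eqref{weird.eq}, $[e]=1+\mu$, is the definition $f(\mu)=\mathfrak e-1$ rewritten. Relation \eqref{meridian2.eq}, $[mx]=\mu[x]$, follows from \eqref{key.eq} with $y=e$: $[xe]+[xme]=[x]\otimes[e]$, i.e. $[x]+[xm]=[x]\tilde\ast[e]=[x]\tilde\ast(1+\mu)=[x]+\mu[x]$ after using centrality, giving $[xm]=\mu[x]$, hence $[mx]=[xm]=\mu[x]$; the paper already flags this dependency. For the reverse direction I would send $\mathfrak e\mapsto 1+\mu$, $\bar{\mathfrak m}\mapsto [\bar m]$ (and $[x]\mapsto[x]$) from $H_0(\Omega\mathcal C)(1)$ into $A_{Ng}$, check it kills the centrality relations defining $A_{St}$ (which hold in $A_{Ng}$ by \eqref{meridian1.eq} and \eqref{weird.eq}), and verify the two composites are the identity on generators. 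Since both algebras are generated over $\Z[\mu^{\pm1}]$ by $\pi_1(M\setminus K)$ and all relations match, this is bookkeeping once the dictionary is fixed.

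The main obstacle I expect is not any single computation but getting the coefficient-ring bookkeeping airtight: one must confirm that the $\Z[\mu^{\pm1}]$-module structure on $A_{St}$ coming from $f$ is compatible with treating $[e]$ both as a group-ring element and as $1+\mu$, that the grading-shifted cobar differential's contribution has genuinely been absorbed into the $H_0$ description of Proposition~\ref{H_0.prop} so that no further relations are hiding, and that the twist by $[e]+[m]$ interacts correctly with adjoining the unit (the $R(1)$ construction) — in particular that $1\tilde\ast x = x$ fails and instead the unit of $A_{St}$ is the adjoined $1$, not $[e]$, which is why relation \eqref{weird.eq} is a nontrivial identification rather than a tautology. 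Once these compatibilities are spelled out, the isomorphism and its $\Z[\mu^{\pm1}]$-equivariance follow directly from Propositions~\ref{Ast.prop}, \ref{AstCoefficients.prop} and Lemma~\ref{eCommute.lma}.
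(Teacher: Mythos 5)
Your proposal is correct and follows essentially the same route as the paper: both proofs match the presentation of $A_{St}$ from Proposition~\ref{Ast.prop} against the presentation of $A_{Ng}$ in Definition~\ref{CordAlgebra.dfn}, identify relation~(\ref{key.eq}) with the product twisted by $[e]+[m]$, and absorb relations~(\ref{weird.eq}) and~(\ref{meridian2.eq}) via the substitution $\mu\mapsto[e]-1$, $\mu^{-1}\mapsto[\bar m]-1$. The only difference is organizational — you build the map from $T(V_\mu)$ into $A_{St}$ and exhibit an explicit inverse, while the paper defines the map in the opposite direction and asserts bijectivity — so no further comment is needed.
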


%

\begin{proof}
Let $V$ denote the module freely generated by $\pi_1(M\setminus K)$ over $\Z$
and $T(V)$ denote its unital tensor product.
 Proposition \ref{Ast.prop} implies that $A_{St}$ is isomorphic to $T(V)$ modulo the two-sided ideals generated by 
 (\ref{key.eq}), (\ref{longitude.eq}) and (\ref{meridian1.eq}). 
 We write this as
 \begin{equation}
 \label{ASt1.eq}
 A_{St} \cong \frac{T(V)}{(\ref{key.eq}), (\ref{longitude.eq}), (\ref{meridian1.eq}).}
 \end{equation}
As mentioned earlier, we can add the identity element before or after imposing any of these relations. (In Proposition \ref{Ast.prop} the identity element is added after dividing by the ideal associated to (\ref{key.eq}),
whereas $T(V)$ is already equipped with an identity element.)

Since the ideals associated to relations (\ref{key.eq}), (\ref{longitude.eq}) and (\ref{meridian1.eq}) are defined not using $\mu,$
\begin{equation}
\label{ASt2.eq}
\frac{T(V_\mu)}{(\ref{key.eq}), (\ref{longitude.eq}), (\ref{meridian1.eq})}
\cong
 \frac{T(V) \otimes_\Z \Z[\mu^{\pm 1}]}{(\ref{key.eq}), (\ref{longitude.eq}), (\ref{meridian1.eq})}.
\end{equation}

The result of the theorem follows from equations (\ref{ASt1.eq}) and (\ref{ASt2.eq}) if we can prove the following map is an isomorphism. Let
\begin{equation}
f :  \frac{\left( \frac{T(V) \otimes_\Z \Z[\mu^{\pm 1}]}{(\ref{key.eq}), (\ref{longitude.eq}), (\ref{meridian1.eq})} \right)}{(\ref{weird.eq}), (\ref{meridian2.eq})}
\longrightarrow
\frac{T(V)}{(\ref{key.eq}), (\ref{longitude.eq}), (\ref{meridian1.eq})} 
\end{equation}
be defined on a generating set of $V$ by
$f([x]) = [x],$ and on the coefficients by $f(\mu) = [e] - 1$ and
$f(\mu^{-1}) =[\bar{m}] - 1.$ 

To see that $f$ is well-defined, note that
\begin{eqnarray*}
f(\mu [x] - [mx]) & = & ([e] - 1) \tilde{*}[x] - [mx] = [x] + [mx] - [x] - [mx] = 0\\
f(\mu^{-1} [x] - [\bar{m}x]) & = & ([\bar{m}] - 1) \tilde{*}[x] - [\bar{m}x] =[\bar{m}x]  +[x] - [x] - [\bar{m}x] = 0\\
f([e] - (1+\mu)) & =  & [e] - (1 +([e]-1)) = 0.
\end{eqnarray*}
Since $([e] - 1) \tilde{*} ([\bar{m}] - 1) = 1 = \mu \tilde{*} \mu^{-1},$ the map is an algebra morphism. 
It is also clearly bijective.

\end{proof}






%

 
\section{Acknowledgements}

The authors would like to thank Lenny Ng and Jim Stasheff for valuable comments and remarks that helped improve the narrative of the paper.

\end{document}